\newtheorem{theorem}{Theorem}
\theoremstyle{definition}
\theoremstyle{plain}
\newtheorem{thm}{Theorem}[section]
\newtheorem{lem}[thm]{Lemma}
\newtheorem{prop}[thm]{Proposition}
\title[Asymptotic Faithfulness of Quantum $\mathrm{Sp}(4)$ Representations]{Asymptotic Faithfulness of Quantum $\mathrm{Sp}(4)$ Mapping Class Group Representations}
\author{Wade Bloomquist}
\email{bloomquist@math.ucsb.edu}
\address{Dept. of Mathematics\\
    University of California\\
    Santa Barbara, CA 93106-6105\\
    U.S.A.}
\begin{document}

\begin{abstract}

We prove asymptotic faithfulness for the quantum $\mathrm{Sp}(4)$ mapping class group representation.  This provides the first example of asymptotic faithfulness lying outside of the $A_n$ family. The methods used are generalized from the proof of asymptotic faithfulness for skein $SU(2)_k$ mapping class group representations.  In short, for any noncentral mapping class a comparison vector is found which allows for the mapping class to be detected.

\end{abstract}

\maketitle

\section{Introduction}

Bridging the fields of classical and quantum topology remains an active area of research.  When looking at mathematical formulations of quantum Witten-Chern-Simons theories, WCS$(G,k)$ \cite{Witt}, it is expected that aspects of classical topological will emerge when the level of the theory, $k$, tends to infinity.  The mathematical realizations of quantum Witten-Chern-Simons theories are commonly considered to be the Reshetikhin-Turaev TQFTs arising from the modular tensor cateogires $G_k$, which are constructed from quantum groups at roots of unity \cite{RT1,RT2,Tur,RSW}.  The construction outlined by Turaev allows for the construction of a TQFT given any modular tensor category.  Of particular interest for us are the Jones-Kauffman TQFTs \cite{Tur, BHMV, Wang}.  Here the modular category in question arises from Temperley-Lieb-Jones algebroids.  The resulting modular tensor category should be seen as parallel to $SU(2)_k$.  These Temperley-Lieb-Jones categories arose from the work of Jones \cite{Jon}, and were reformulated by Kauffman using a diagrammatic language \cite{KL}.   

In the spirit of recovering features of classical topology from these constructions we recall a question originally posed by Turaev \cite{Tur}: 
\newline
\lq\lq Is the (projective) action of the mapping class group of a closed oriented surface $\Sigma$ in the module of states of $\Sigma$ irreducible?  Consider the kernels of these actions corresponding to all modular categories.  Is the intersection of these kernels non-trivial? (This would be hard to believe.)"
\newline
In a more modern language this question is phrased in terms of asymptotic faithfulness.  Namely, does the quantum mapping class group representation arising from $G_k$ (where we all $G_k$ to refer to skein constructions as well) eventually detect non-central elements of the mapping class group as $k$ tends to infinity?
Asymptotic faithfulness for skein $SU(2)_k$, meaning for Jones-Kauffman TQFTs, was proven in \cite{FWW},  which is independent from the  parallel asymptotic faithfulness of the representations from $SU(n)_k$ TQFTs \cite{And}.   The skein theoretic methods were extended to skein $SU(3)_k$ in \cite{BW}.

A salient feature of $SU(2)_k$ TQFTs is the multiplicity-freeness of all fusion spaces: the dimension of the state space for a labeled pair of pants is either $0$ or $1$.  We also have that there is a single self-dual fundamental representation.   It follows that the skein theory requires only unoriented simple closed curves, of a single strand type, and so unoriented trivalent graphs with uncolored trivalent vertices.   Then a complexity of vector in the state space is given the topological interpretation of the maximal number of strands passing through the dual disks to the edges being labeled.  To generalize to $Sp(4)$, we have to use two strand types and through some careful maneuvering are able to keep a very similar notion of complexity.  Once we appropriately set the argument in place, our proof is completely analogous to that of skein $SU(2)_k$.

\section{The $C_2$ Spider}
\subsection{Abstract Spiders}We provide a description of spiders which uses more modern language than in the original definition given by Kuperberg \cite{Kup}.  Given a pivotal tensor category and a collection of objects a spider is the full subcategory whose objects are tensor products of the chosen collection of objects and their duals.  A spider serves the role of a planar algebra with labeled strands.  In particular, if the label set is a single symmetrically self-dual object, meaning it is self dual and it's associated frobenius-schur indicator is $1$, then the associated spider is an unoriented unshaded planar algebra.  Kuperberg's original definition can be seen to capture the essential notions found in a pivotal tensor category. Our interest will be solely in the $B_2/C_2$ spider.  Here we mean the spider generated by the fundamental representations  of $Rep^{uni}(U_q(\mathfrak{sp}(4)))$, where the superscript denotes that the unimodal pivotal structure is taken.
\subsection{The Combinatorial $C_2$ Spider} 
Kuperberg has provided a combinatorial description of this spider.  In particular there are two strand types, one associated to each fundamental representation of $U_q(\mathfrak{sp}(4))$.  As in Kuperberg's original work we will use a single strand and a double strand to diagrammatically represent these two strand types.  Then we have that the $C_2$ spider is generated by a single trivalent vertex type, as seen in Figure $1$.  
\begin{figure}
\includegraphics[scale=.25]{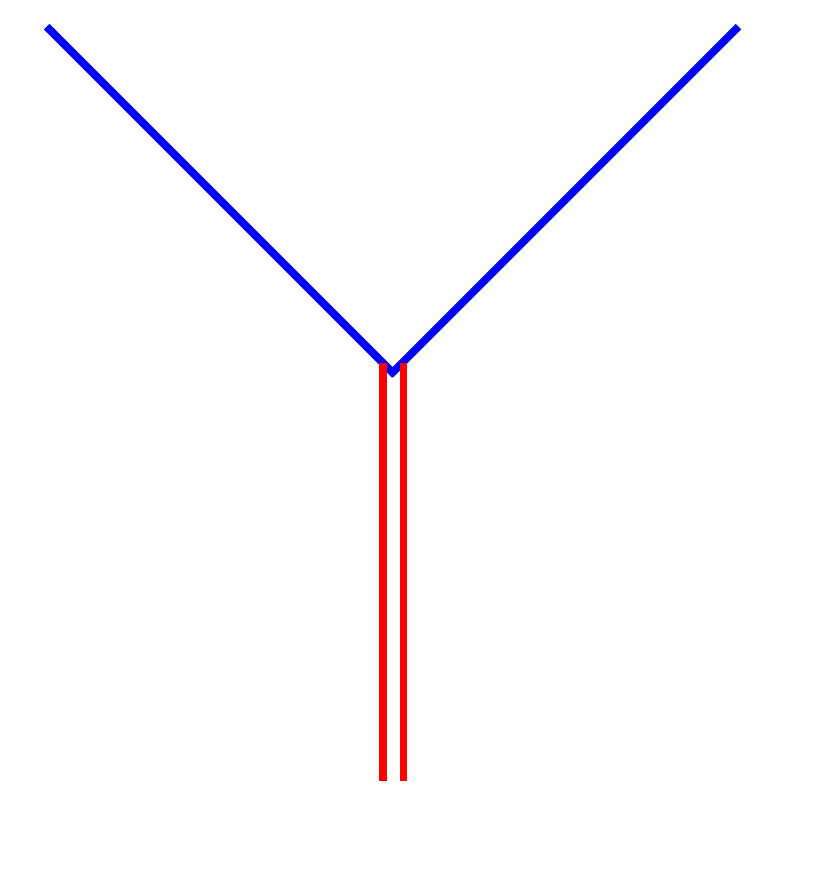}
\caption{The generator of the $C_2$ spider}
\end{figure}
The relations seen in Figure $2$ then complete the description of the $C_2$ spider.
\begin{figure}
\includegraphics[scale=.45]{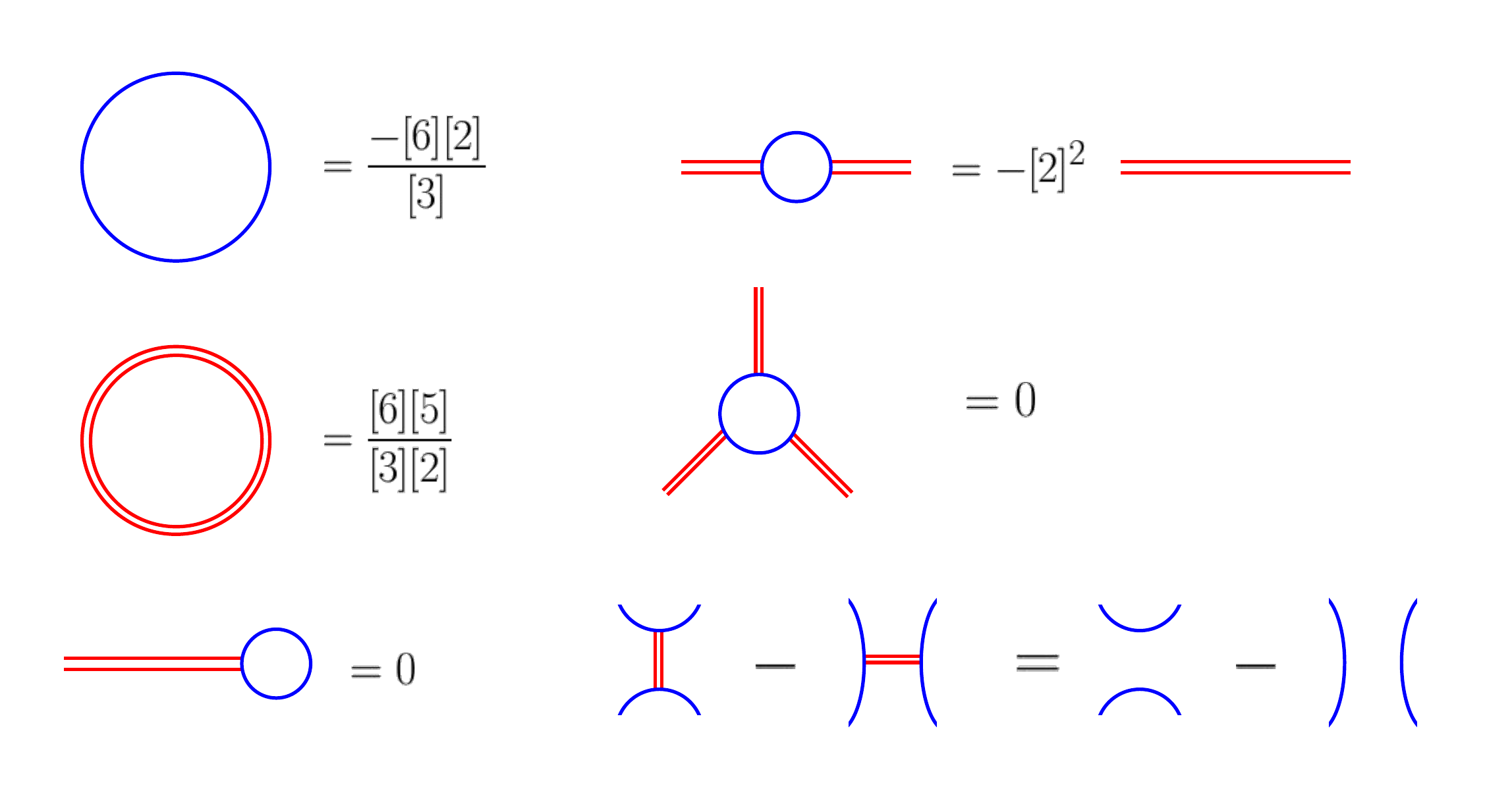}
\caption{The relations in the $C_2$ spider}
\end{figure}
The equivalence of this combinatorial spider with the spider described above implies that the morphisms in the representation category can be described as linear combinations of diagrams generated as above subject to the given relations.  This is analogous to non-crossing planar matchings, or Temperley-Lieb diagrams, for $Rep^{uni}(U_q(sl(2,\mathbb{C})))$.

\subsection{Clasped $C_2$ Spider}  
Clasps provide a method for passing from the spider as described above to a new full subcategory, namely the one generated by all irreducible representations.  The power in taking this additional step is that we can bring the combinatorial description given to the original spider over to this new full subcategory (which is actually of much greater interest).  While the existence of these clasps was proven in Kuperberg's original work, explicit constructions of the clasps, making use of the combinatorial structure, were partially given by Kim \cite{Kim}.  Partial results in this case mean that constructions are only found for clasps of the type $(p,0)$ and $(0,q)$.  In order to best state these results the change of basis seen in Figure $3$ is introduced.
\begin{figure}
\includegraphics[scale=.55]{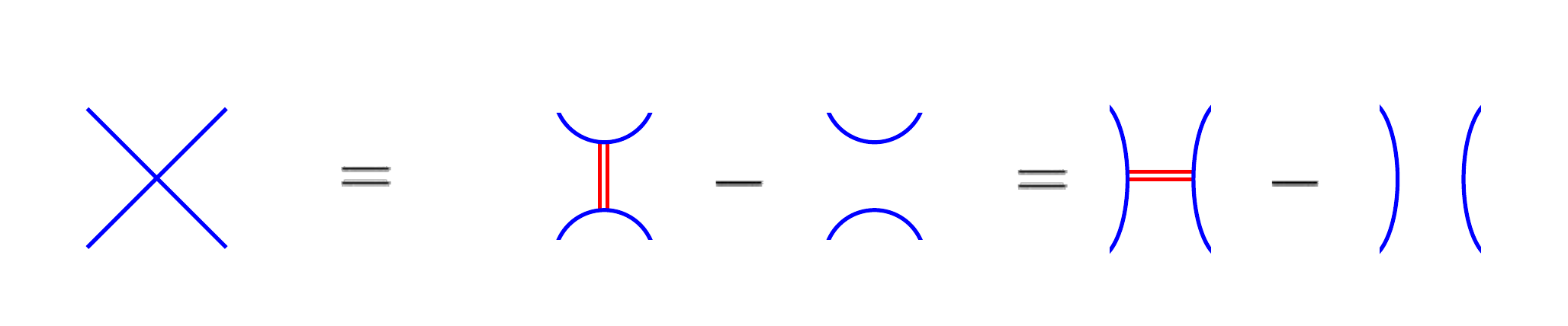}
\caption{Defining a tetravalent vertex formally}
\end{figure}
\begin{theorem}[\cite{Kim}]
The clasps of type $(n,0)$ satisfy the recursive relationship given in Figure $4$.
\end{theorem}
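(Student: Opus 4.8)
The plan is to characterize the clasp of type $(n,0)$ by a universal property and then verify that the diagrammatic expression on the right-hand side of Figure $4$ satisfies it. Write $V$ for the single-strand fundamental representation. The clasp $P_{(n,0)}$ is, up to scalar, the unique idempotent in $\mathrm{End}(V^{\otimes n})$ projecting onto the irreducible summand labelled $(n,0)$ (the top component $n\omega_1$ of $V^{\otimes n}$, which occurs with multiplicity one); concretely it is the unique element normalized so that the coefficient of the identity diagram is $1$ and annihilated, under pre- and post-composition, by every elementary turnback on the row of $n$ single strands. By this uniqueness it suffices to exhibit \emph{any} element with these properties, so the entire argument reduces to checking idempotency, annihilation, and normalization for the proposed recursive formula.

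The feature distinguishing this from the Temperley--Lieb / $SU(2)_k$ situation is that a row of single strands admits \emph{two} species of elementary turnback: the ordinary cap-cup joining two adjacent single strands, and the fusion of two adjacent single strands into a double strand, via the trivalent vertex of Figure $1$, followed by re-splitting. Both must be annihilated. Accordingly, before anything else I would record the scalar data extracted from the relations of Figure $2$: the single- and double-strand loop values, the evaluation of the bigon obtained by fusing two single strands through a double strand, and the relevant theta-networks. These are the $C_2$ replacements for the quantum integers of Wenzl's recursion and are exactly what appear as coefficients in Figure $4$. It is convenient to package the two turnback types using the formally defined tetravalent vertex of Figure $3$, which lets the two corrections be written uniformly.

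Next I would posit the recursion as $P_{(n,0)} = (P_{(n-1,0)}\otimes\mathrm{id}) + (\text{corrections})$, where the corrections attach a turnback built from $P_{(n-1,0)}$ at the last two strands, leaving the scalar in front of each correction undetermined. Imposing that each elementary turnback at the final position annihilate the expression gives one linear equation per turnback type; evaluating the closed networks that appear, using the data from the previous step, solves these equations and yields precisely the coefficients of Figure $4$ (the $C_2$ analogue of the $[n-1]/[n]$ appearing for $SU(2)_k$). Idempotency, and annihilation by turnbacks at \emph{every} position rather than only the last, then follow by induction, using the absorbing property of $P_{(n-1,0)}$ together with the spider relations.

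The main obstacle will be the interaction between the two correction terms. In the $SU(2)_k$ case there is a single correction and the verification is immediate, but here sliding a single-strand turnback past the double-strand correction, or capping the double strand produced by one correction with a second turnback, triggers the nonlinear relations of Figure $2$ and produces cross-terms. The crux of the proof is checking that these cross-terms cancel, so that the two annihilation conditions are simultaneously satisfiable and the resulting element is a genuine idempotent. This compatibility computation is also what confines the explicit construction to the $(n,0)$ and, dually, $(0,q)$ clasps, the general $(p,q)$ case requiring both strand types to pass through the clasp at once.
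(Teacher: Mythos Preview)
The paper does not actually prove this theorem: it is stated as a citation of Kim \cite{Kim}, with no argument given beyond the attribution. So there is no ``paper's own proof'' to compare against. Your outline is the standard route---characterize $P_{(n,0)}$ by idempotency, normalization, and annihilation of both species of elementary turnback, then fix the recursion coefficients by imposing annihilation at the last pair of strands---and is essentially how Kim establishes the formula. Nothing more is needed here.
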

\begin{figure}
\includegraphics[scale=.45]{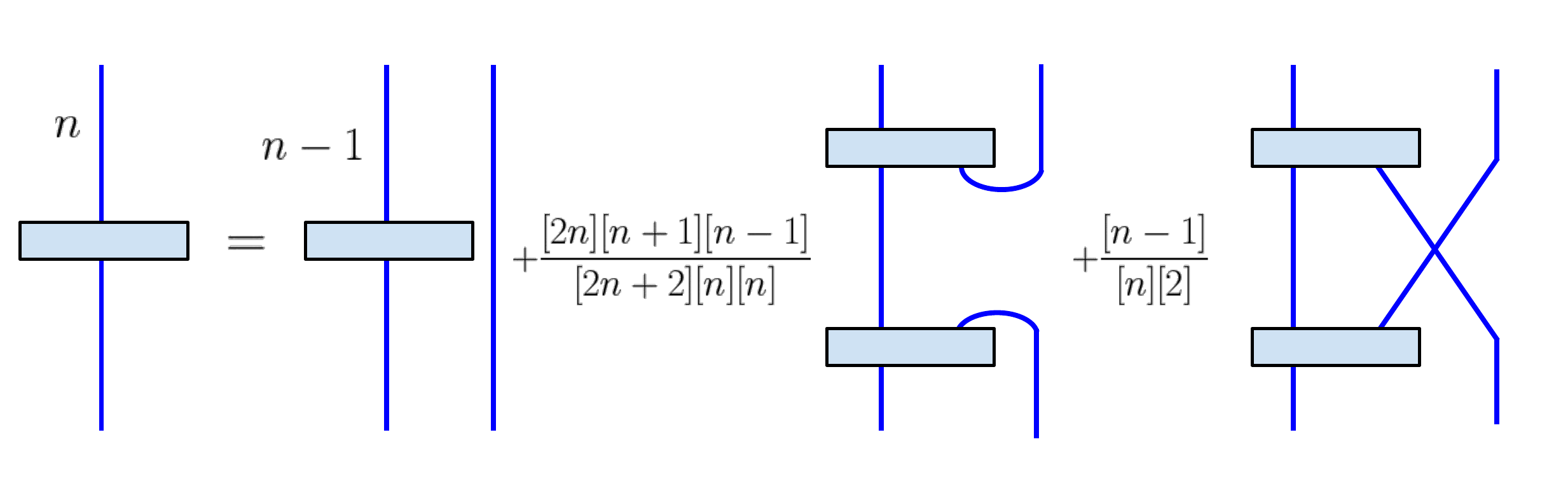}
\caption{A recursive description of a $(n,0)$ clasp}
\end{figure}
The key property of these clasps is the annihilation of webs which create a cut path with weight lower than the clasp.  This should be thought of as the generalization of the annihilation of "cups" and "caps" by Jones-Wenzl projectors. Now if a crossing is introduced  we know that it will decompose as a sum of the diagrams seen in Figure $5$.
\begin{figure}
\includegraphics[scale=.4]{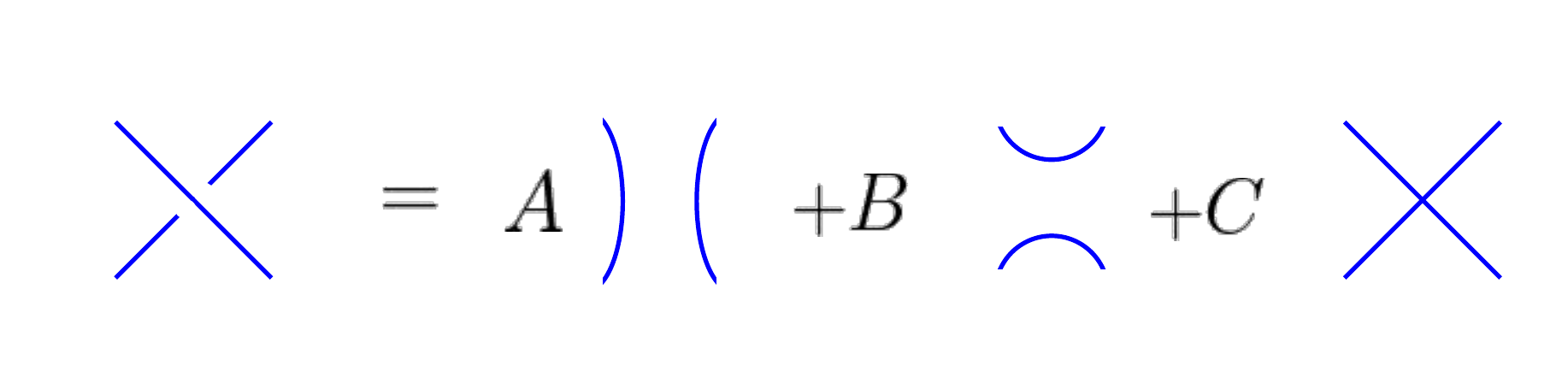}
\caption{}
\end{figure}
\begin{lem}
Let $b$ be a braid on $n$ strands and $c(b)$ be the signed crossing number of crossings of $b$.  Then
\[bP_{(n,0)}=A^{c(b)}P_{(n,0)}.\]
\end{lem}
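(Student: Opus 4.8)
The plan is to induct on the number of crossings in a braid word for $b$, reducing everything to the case of a single crossing sitting adjacent to the clasp. Write $b = b'\sigma_i^{\epsilon}$, where $\sigma_i^{\epsilon}$ (with $\epsilon = \pm 1$) is the crossing immediately above $P_{(n,0)}$ and $b'$ is a braid with one fewer crossing. If I can establish the single-crossing identity $\sigma_i^{\epsilon}P_{(n,0)} = A^{\epsilon}P_{(n,0)}$, then
\[
bP_{(n,0)} = b'\sigma_i^{\epsilon}P_{(n,0)} = A^{\epsilon}\,b'P_{(n,0)} = A^{\epsilon}A^{c(b')}P_{(n,0)} = A^{c(b)}P_{(n,0)},
\]
where the third equality is the inductive hypothesis and the last uses $c(b) = c(b') + \epsilon$. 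The base case $b = \mathrm{id}$, $c(b) = 0$, is immediate, so the entire content is the single-crossing statement.

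For the single crossing, I would resolve $\sigma_i^{\epsilon}$ using the crossing decomposition of Figure~5. This expresses the crossing on the two single strands in positions $i,i+1$ as a linear combination of planar $C_2$ webs: one term is the identity (two parallel single strands) carrying the coefficient $A^{\epsilon}$, and every remaining term contains a turnback, either a cup--cap pair on the two strands or a fusion into the double strand through a pair of trivalent vertices. Stacking this resolution on top of $P_{(n,0)}$ and invoking the key annihilation property of the clasp, that $P_{(n,0)}$ kills any web admitting a cut path of weight strictly less than $n$, each non-identity term is sent to zero, since its turnback lets one draw a cut path crossing fewer than $n$ units of weight. Only the identity term survives, giving $\sigma_i^{\epsilon}P_{(n,0)} = A^{\epsilon}P_{(n,0)}$, exactly parallel to the Jones--Wenzl computation $\sigma_i = A + A^{-1}e_i$ with $e_iP_n = 0$.

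The main obstacle is checking this last point in the two-strand-type setting. For skein $SU(2)_k$ there is a single non-identity resolution term, the cup--cap, and its sub-weight cut path is obvious. Here the $C_2$ crossing formula produces several additional webs, including ones built from the double strand and the trivalent vertex, and I must verify that each such web, after composition with the clasp, genuinely forces a cut path of weight below $n$ rather than merely rerouting strands at full weight. Making the weight bookkeeping precise for the double-strand resolutions, so that the annihilation property of $P_{(n,0)}$ applies uniformly to every term but the identity, is the step that requires care; once it is in place the induction closes immediately.
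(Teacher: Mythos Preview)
Your proposal is correct and follows essentially the same approach as the paper: resolve each crossing via the three-term expansion of Figure~5, observe that the two non-identity terms (the cup--cap and the double-strand fusion) produce a cut path of weight strictly below $(n,0)$ and hence are annihilated by $P_{(n,0)}$, leaving only the identity term with its coefficient $A^{\pm 1}$. The paper's proof simply says ``for each crossing'' rather than writing out the induction, and treats the annihilation of both non-identity terms as immediate; your worry about the double-strand term is easily dispatched by the partial order $(n,0)\succ(n-2,1)$ stated in Lemma~2.2.
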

\begin{proof}
For each crossing we can expand into the three diagrams in Figure $5$.  Then the second and third terms create a turnback which annihilates the clasp of type $(n,0)$.  This leaves us only with $A^{c(b)}$ where $A$ is the coefficient in front of the identity tangle term.  This is illustrated in Figure $6$
\begin{figure}
\includegraphics[scale=.3]{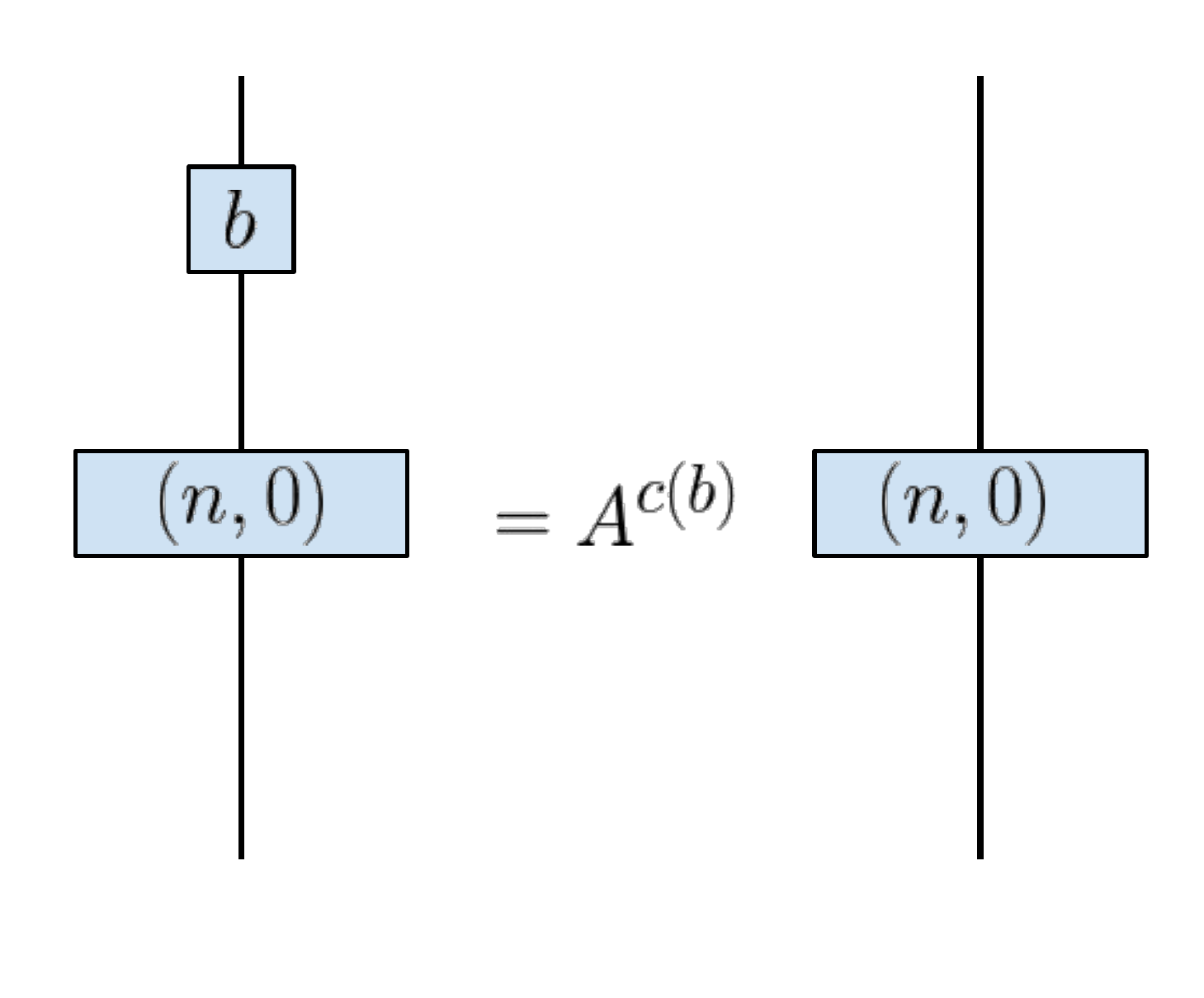}
\caption{}
\end{figure}
\end{proof}
These diagrams allow us to describe the morphisms in 
$\\Rep^{uni}(U_q(\mathfrak{sp}(4)))$ as linear combinations of diagrams where clasps are attached to boundary rather than strands. We call these diagrams clasped webs.  Of particular interest to the discussion at hand we have the following result \cite{BM}.
\begin{theorem}[\cite{BM}]
The triple clasped space, as seen in Figure $6$, 
\[I=I((a,0),(b,0),(c,0))=Hom((a,0)\otimes (b,0)\otimes (c,0),\mathbb{C})\] is either $0$ or $1$ dimensional.  We will say $\{a,b,c\}$ form an admissible triple if $a+b+c$ is even, $a+b\geq c, a+c\geq b,$ and $b+c\geq a$.  Then when $q$ is generic we have that $I$ is $1$ dimensional when $\{a,b,c\}$ are admissible and $0$ dimensional otherwise. When $q$ is a root of unity we have that $I$ is $1$ dimensional exactly when $\{a,b,c\}$ is admissible and the order of $q$ is greater than $2(a+b+c)+4$.  
\end{theorem}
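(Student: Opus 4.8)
The plan is to compute $I$ diagrammatically, exploiting the fact that a clasp of type $(n,0)$ emits only single strands and annihilates any web that creates a cut path of weight below $n$. First I would reduce to a single preferred web. Since the three clasps contribute $a$, $b$, and $c$ single strands to the boundary, every surviving clasped web is a trivalent web on these $a+b+c$ endpoints. The clasp property immediately rules out any strand that turns back into the clasp it came from, since such a turnback lowers the weight of a cut path across that clasp; this kills the most naive families of webs. The remaining task is then to organize the admissible internal webs and show there is essentially only one.

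Second, I would show that, up to the $C_2$ spider relations of Figure 2, every surviving web is determined by the planar single-strand matching among the three clasps. Writing $x$, $y$, $z$ for the number of single strands running $A$--$B$, $B$--$C$, and $A$--$C$ respectively, planarity together with the no-turnback condition forces the system $x+z=a$, $x+y=b$, $y+z=c$, whose unique solution is $x=\tfrac{a+b-c}{2}$, $y=\tfrac{b+c-a}{2}$, $z=\tfrac{a+c-b}{2}$. A nonnegative integer solution exists precisely when $a+b+c$ is even and the three triangle inequalities hold, that is, exactly when $\{a,b,c\}$ is admissible; this already shows $I=0$ outside the admissible range. The genuinely $C_2$-specific difficulty, which I expect to be the \emph{main obstacle}, is controlling webs that introduce interior double strands (two single legs fusing at a trivalent vertex), a phenomenon with no $sl_2$ analogue. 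Here I would use the spider relations of Figure 2 together with the crossing expansion of Figure 5 and the braid--clasp lemma to rewrite any double-strand web back in terms of the single-strand matching, thereby forcing $\dim I \le 1$ in all cases.

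Third, to upgrade $\dim I \le 1$ to $\dim I = 1$ in the admissible range I must show the preferred web is a nonzero morphism. At generic $q$ this is automatic from the representation theory of $U_q(\mathfrak{sp}(4))$, where $V(a\omega_1)\otimes V(b\omega_1)\otimes V(c\omega_1)$ contains the trivial object with multiplicity one exactly under admissibility. At a root of unity I would instead evaluate the closed theta-type network obtained by pairing the candidate web with itself, using the recursive description of the $(n,0)$ clasps from Kim's theorem; the web is nonzero if and only if this evaluation is nonzero. Tracking the quantum integers $[n]$ that appear, the factors occurring run up through $[\,a+b+c+2\,]$, so they are all nonzero precisely when the order of $q$ exceeds $2(a+b+c)+4$. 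This explicit nonvanishing computation is the delicate final step, since it is where the stated root-of-unity threshold is forced and where the normalization of the $C_2$ clasps, rather than the classical Jones--Wenzl projectors, genuinely enters.
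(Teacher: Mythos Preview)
Your proposal is correct and matches the paper: the proof given there is simply a citation to \cite{BM}, summarized as ``a recursive calculation of the corresponding theta net based on the description of clasps given by Kim,'' which is precisely your step~3. Your steps 1--2 add a diagrammatic route to $\dim I\le 1$ that the paper does not spell out, but since you already invoke the generic-$q$ representation theory of $U_q(\mathfrak{sp}(4))$ for the multiplicity-one bound, the combinatorial reduction of internal double-strand webs is not strictly necessary---the theta-net nonvanishing at roots of unity is the only step where Kim's recursion genuinely does the work.
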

\begin{proof}
This is the main result proven in \cite{BM}.  The proof follows from a recursive calculation of the corresponding theta net based on the description of clasps given by Kim.  
\end{proof}
\begin{figure}
\includegraphics[scale=.45]{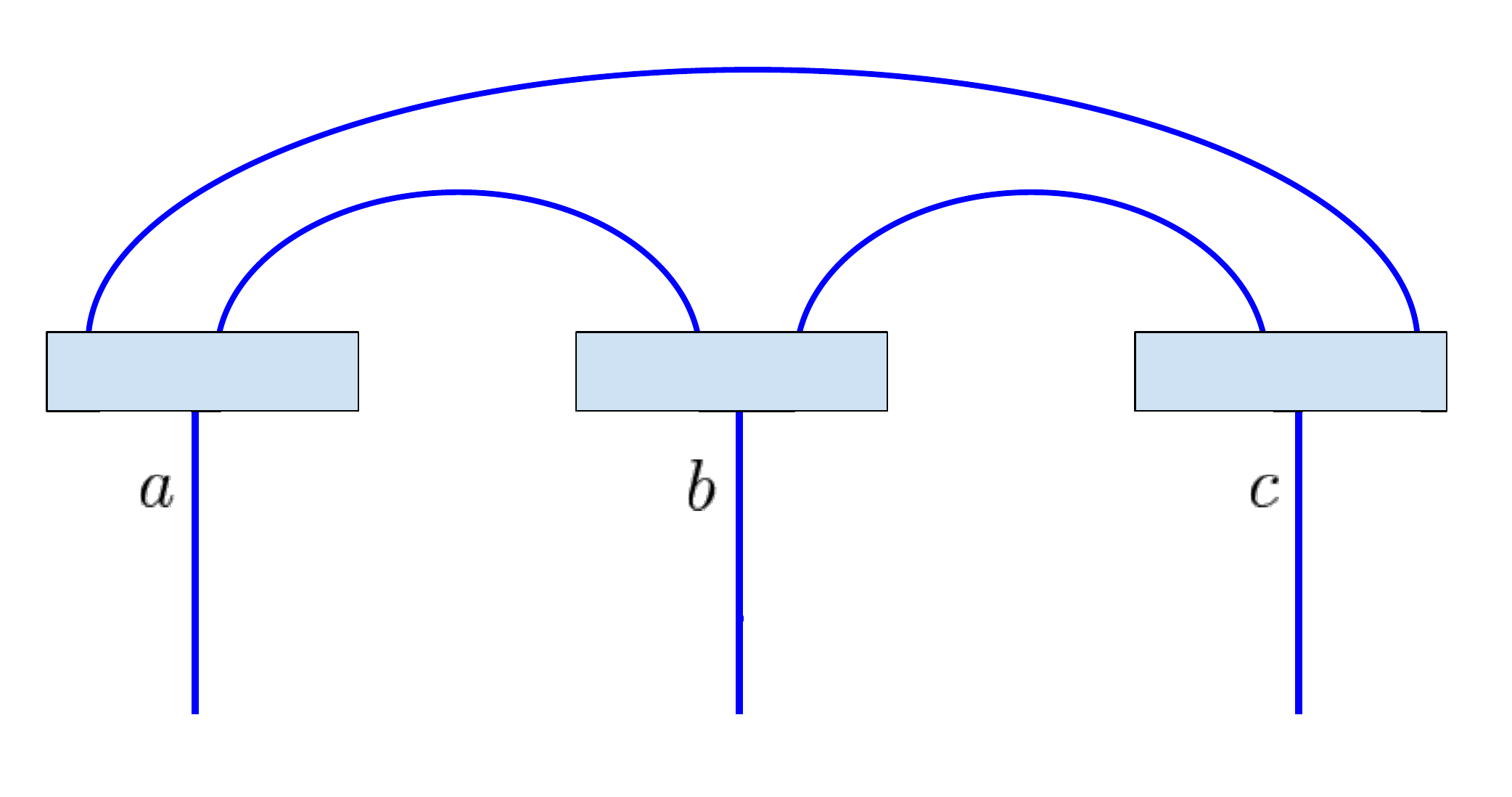}
\caption{$Hom((a,0)\otimes(b,0)\otimes(c,0),\mathbb{C})$}
\end{figure}
We also note that as this vector space is $1$ dimensional vectors in it must be a multiple of the diagram shown in Figure $6$.  
\subsection{Constructing a Modular Tensor Category} 
When $q$ is generic the spiders described above have only been described as pivotal tensor categories, with infinitely many simple objects.  Turaev has outlined the process of constructing a topological quantum field theory from a modular tensor category \cite{Tur}.  The first step in this process would be drop to finitely many simple objects.  This is done by a semi-simplification through modding out by negligible morphisms.  In the language of spiders this corresponds to modding out the clasps which have $0$ trace, where the trace is seen in Figure $8$.  
\begin{figure}
\includegraphics[scale=.35]{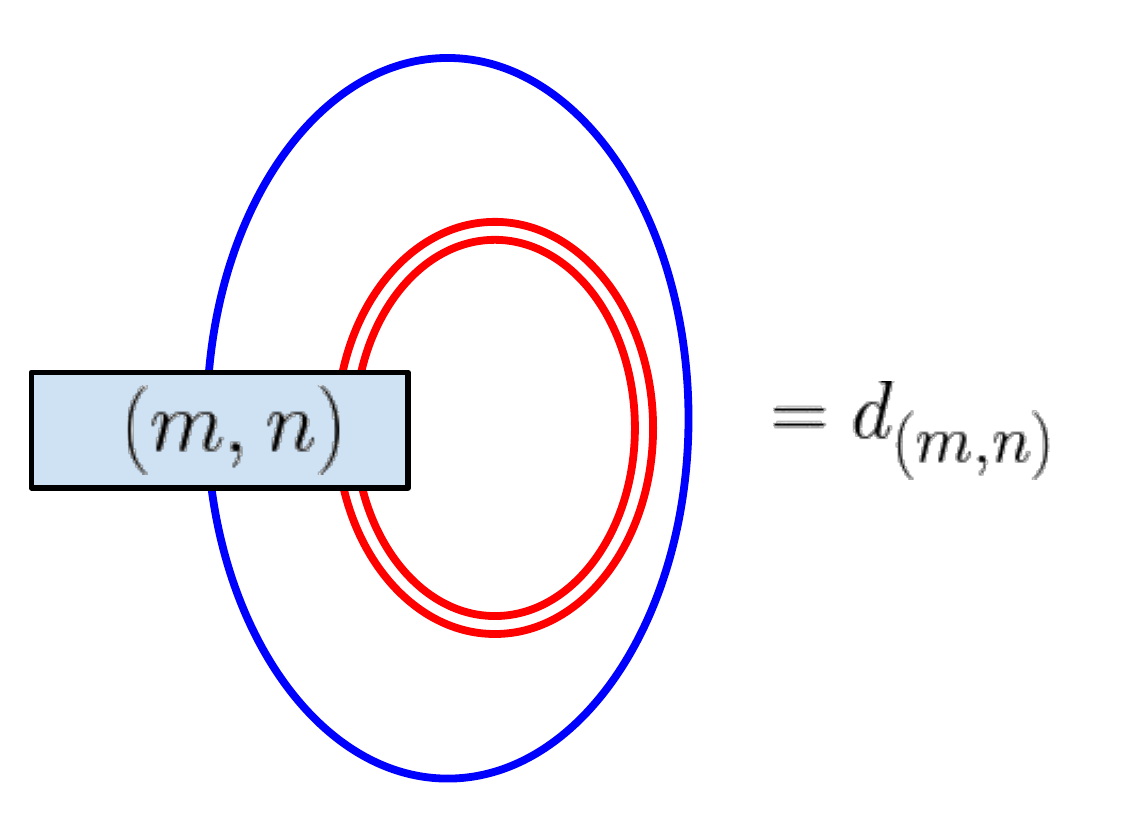}
\caption{The trace of a clasp}
\end{figure}
The details of this process were first worked out by Turaev and Wenzl \cite{TW}.  Here modular categories were constructed using the $\mathfrak{sp}(4)$ link invariant.  This work was continued by Blanchet and Beliakova, who showed the connections to the modular categories coming from quantum groups \cite{BB}.  When $q$ is a root of unity of order $2(2k+6)$ we will call the semi-simplified version of the $C_2$ spider $\mathrm{Sp}(4)_k$.  Specifically, $\mathrm{Sp}(4)_k$ has all $(a,b)$ such that $a+b\leq k$ as simple objects.  Now only will these results allow us to use the construction of Turaev, but also we have the following lemma.
\begin{lem}
In the $C_2$ spider we have that the identity tangle on $a$ strands of the first type and $b$ strands of the second type factors as the sum of clasps of weight less than or equal to $(a,b)$.  Where the ordering on clasps is the standard partial order generated by 
\[(a,b)\succ (a-2,b+1)\]
\[(a,b)\succ (a+2,b-2).\]
Moreover in $\mathrm{Sp}(4)_k$, we have that that the coefficient on the $(a,b)$ clasp is $1$ if $a+b\leq k$.  
\end{lem}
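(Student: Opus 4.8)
The plan is to prove the factorization by induction on the total weight $a+b$, using the recursive clasp description from Theorem~1 (Kim's recursion) together with the partial-order structure on clasps. The base cases are the single-strand identities, where the identity on $(1,0)$ or $(0,1)$ strands is literally the $(1,0)$ or $(0,1)$ clasp with coefficient $1$, since no lower-weight clasps exist and no nontrivial webs can be inserted. For the inductive step, I would take the identity tangle on $a$ strands of the first type and $b$ strands of the second type and insert a clasp projector onto the top weight $(a,b)$; the defining property of clasps (annihilation of cut paths of strictly smaller weight, the generalization of Jones--Wenzl cup/cap killing noted after Theorem~1) guarantees that the top-weight term appears with coefficient exactly $1$, while the remaining terms are supported on diagrams whose cut paths have strictly smaller weight.

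The heart of the argument is to identify precisely which lower weights can occur and to show they are exactly the ones dominated by $(a,b)$ in the stated partial order. Here I would use the two elementary web moves available in the $C_2$ spider: the relations of Figure~2 permit a double strand to split into two single strands and a pair of single strands to fuse into a double strand, and these are exactly the moves that generate the two covering relations $(a,b)\succ(a-2,b+1)$ and $(a,b)\succ(a+2,b-2)$. Every turnback or reduction produced when resolving the identity against the clasp lowers the cut-path weight by one of these two elementary steps, so by transitivity every surviving term is a clasp of weight $\preceq (a,b)$. The coefficient on the top clasp being $1$ then follows by normalizing the clasp to be idempotent, i.e.\ $P_{(a,b)}^2 = P_{(a,b)}$, so that the identity reproduces the clasp with unit coefficient modulo lower terms.

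For the final assertion in $\mathrm{Sp}(4)_k$, the point is that when $a+b \le k$ the clasp $(a,b)$ survives semisimplification: it is not a negligible morphism, since its trace (the quantum dimension pictured in Figure~8) is nonzero precisely in the range $a+b\le k$ by the construction of $\mathrm{Sp}(4)_k$ as having exactly these $(a,b)$ as simple objects. Hence the top-weight coefficient, which is $1$ in the generic $C_2$ spider, is preserved under the quotient and remains $1$ rather than collapsing to $0$.

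I expect the main obstacle to be controlling the combinatorics of \emph{which} intermediate weights appear when fully resolving the identity tangle, and verifying that no term escapes the downward-closure of $(a,b)$ under the two generating relations. In particular, one must check that repeated application of the split/fuse moves never produces a weight incomparable to $(a,b)$ — this requires a careful bookkeeping argument showing that the reachable weights form exactly the order ideal below $(a,b)$, which is the technical core of the proof.
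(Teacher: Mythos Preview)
Your approach is substantially different from the paper's, and it contains a concrete error.

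The paper does not argue diagrammatically at all. It invokes the equivalence of the combinatorial $C_2$ spider with $Rep^{uni}(U_q(\mathfrak{sp}(4)))$: the identity tangle on $a$ single and $b$ double strands is literally the identity endomorphism of $V^{\otimes a}\otimes W^{\otimes b}$, and semisimplicity writes this as the sum of projections onto irreducible summands, which are the clasps. Standard highest-weight theory then says every irreducible constituent has highest weight $\preceq a\omega_1+b\omega_2$ in the root order, and the top one occurs with multiplicity one. That is the whole proof; the partial order and the coefficient $1$ are read off from weights, not from web combinatorics.

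Your inductive diagrammatic route has two problems. First, Theorem~1 (Kim's recursion) only constructs clasps of type $(n,0)$, so it does not supply the general $P_{(a,b)}$ you want to insert at the top of the induction; you would need to fall back on Kuperberg's abstract existence, not Kim. Second, and more seriously, your claimed dictionary between the two strand moves and the two covering relations is incorrect. Fusing two single strands into a double across a trivalent vertex does change a cut weight by $(a,b)\to(a-2,b+1)$, matching the first covering relation. But splitting a double into two singles changes the cut weight by $(a,b)\to(a+2,b-1)$, which is \emph{not} $(a+2,b-2)$; in fact $(a+2,b-1)\succ(a,b)$, since $(2,-1)=\alpha_1$ is a simple root. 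So the split move \emph{raises} weight, and your proposed bookkeeping cannot confine the lower terms to the order ideal below $(a,b)$ using these two moves. The covering relation by $\alpha_2=(-2,2)$ does not correspond to any single web move; producing it diagrammatically requires a composite of moves whose net effect is only visible once you already know the weight lattice --- which is exactly the representation-theoretic input the paper uses directly.
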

\begin{proof}
The first part of this lemma follows immediately from looking at $V^{\otimes a}\otimes W^{\otimes b}$ in $Rep^{uni}(U_q(\mathfrak{sp}(4)))$, where $V$ and $W$ are the fundamental representations.  As we are working in a modular tensor category we know that this will decompose into the direct sum of finitely many simple objects.  The second part of this lemma follows from looking at the weights of these representations.
\end{proof}
\begin{figure}
\includegraphics[scale=.45]{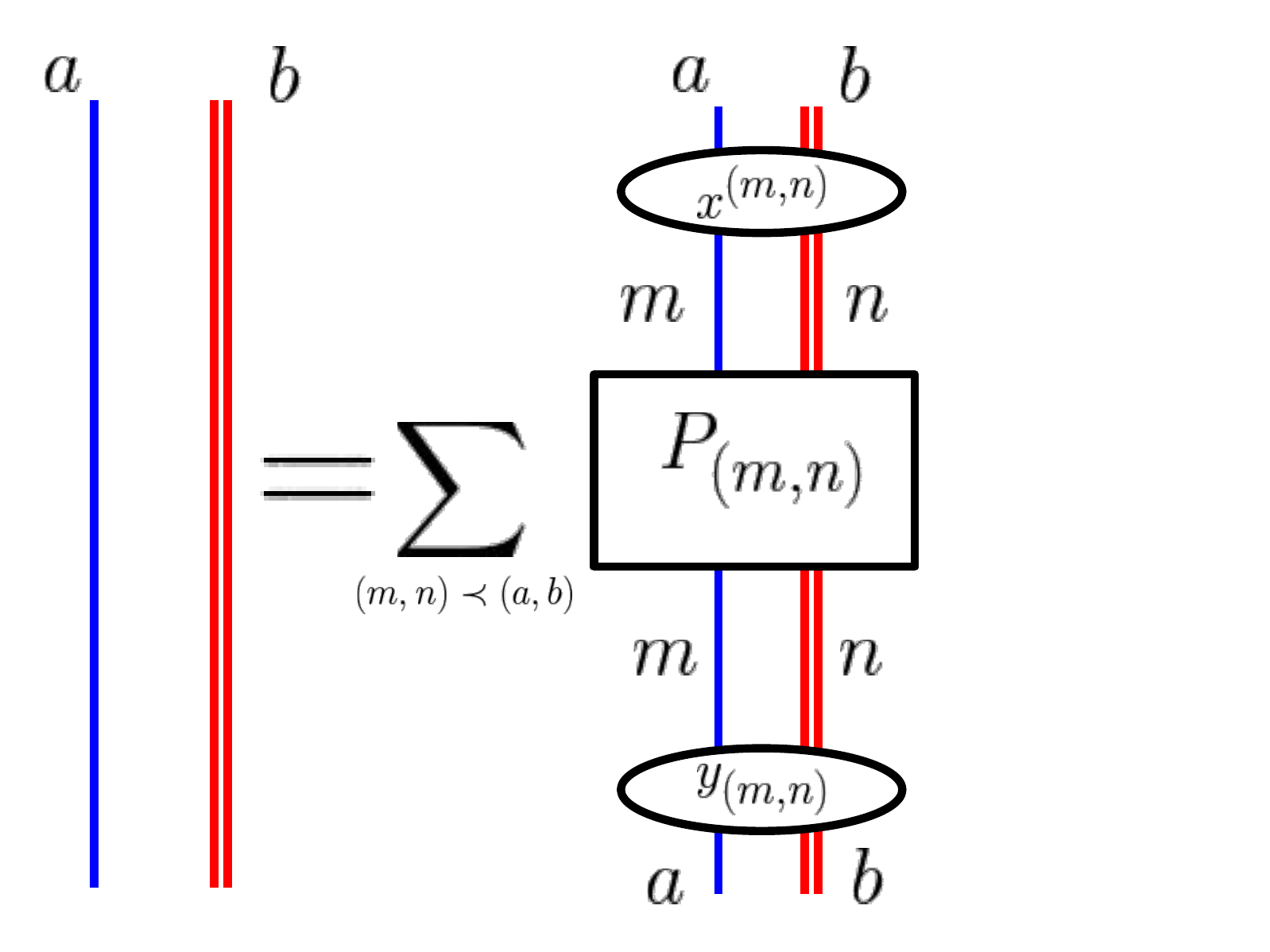}
\caption{A graphical representation of Lemma $2.1$}
\end{figure}
\section{The $\mathrm{Sp}(4)_k$ TQFT} 
\subsection{The State Space $V(\Sigma)$}
Let $\Sigma$ be a closed orientable surface.   Up to a homeomorphism into Euclidean space, we can think of $\Sigma$ as bounding a standardly embedded handlebody $H$.  Then we can associate to $\Sigma$ the spine of $H$, namely a trivalent graph whose regular neighborhood is $H$.  The choice of a particular trivalent graph corresponds to choosing a pants decomposition of $\Sigma$ by looking at the disk dual to the edge.  Now we define an admissible labeling of a trivalent graph.  At each edge of a trivalent graph we associate a clasp in $\mathrm{Sp}(4)_k$, and to each vertex a vector in the triple clasped space of the three incident edges.  Then we define $V_k(\Sigma)$, often with the $k$ omitted when it is not needed, as the free complex vector space having as a basis the admissible labelings of the above spine.
\subsection{An Action of the Mapping Class Group}
The construction of the $\mathrm{Sp}(4)_k$ TQFT not only gives rise to a vector space associated to $\Sigma$, but also an action of $MCG(\Sigma)$ on $V(\Sigma)$.  This is action arises from looking at the surgery description of the mapping cylinder associated to a mapping class.  When looking at a positive Dehn twist about a curve $\gamma$, this action is described by adjoining the curve $\gamma$ given a $-1$ framing and colored by $\omega$, where $\omega$ is a weighted sum of simple objects that allows for invariance under Kirby moves. Using the relations of the $C_2$ spider this web, with with the adjoined colored framed curve, can be resolved down to basis elements.  Then given any mapping class $h$ of $\Sigma$ we can describe
\[V_h:V(\Sigma)\rightarrow V(\Sigma)\]
by decomposing $h$ into the composition of Dehn twists.  In Figure $10$ we see an example of this action, prior to resolving, when $\Sigma$ is the genus $1$ surface.
\begin{figure}
\includegraphics[scale=.3]{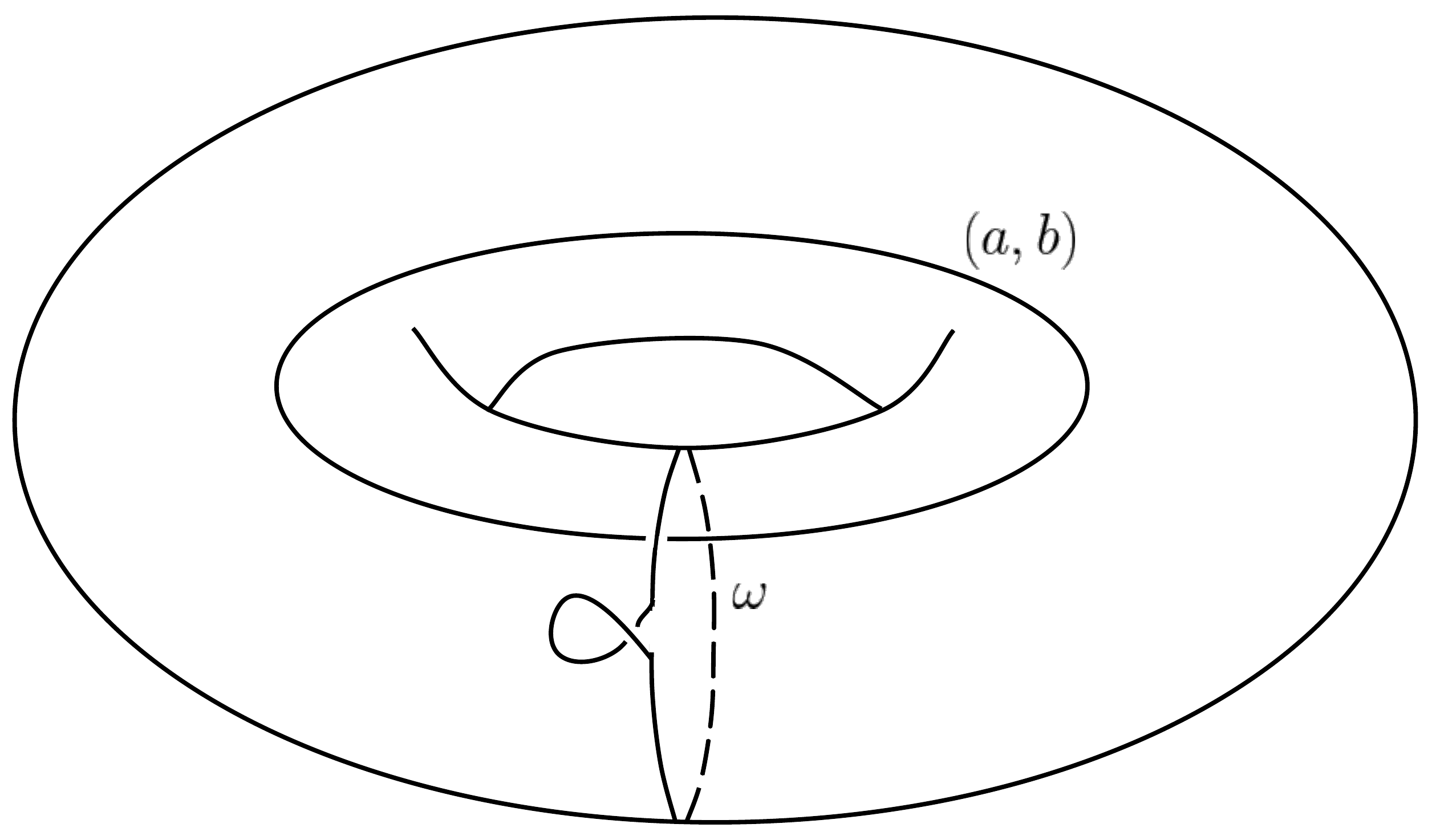}
\caption{The action of a Dehn twist about the meridian}
\end{figure}
\subsection{Curve Operators}
We look to define curve operators on the state space $V(\Sigma)$.  Let $\gamma$ be a simple closed curve on $\Sigma$.  We define a curve operator associated to $\gamma$.  Namely 
\[C(\gamma)=Z(\Sigma\times I, (\gamma)_1\times\{1/2\})\in V(\Sigma)\times V(-\Sigma)=End(V(\Sigma)).\]
Where $(\gamma)_1$ is defined to be the curve $\gamma$ colored with the fundamental representation $(1,0)$.  Then we see that $C(\gamma)$ is the resolution of adjoining the curve $\gamma$ colored with $(1,0)$.  We have the following lemma:
\begin{lem}
Let $h:\Sigma\rightarrow\Sigma$ be an orientation preserving homeomorphism.  Then we have 
\[V_h C(\gamma) V_h^{-1}=C(h(\gamma)).\]
\end{lem}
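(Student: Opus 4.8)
The plan is to deduce this identity from the fundamental naturality property of the underlying Reshetikhin--Turaev functor $Z$: a homeomorphism of decorated cobordisms intertwines the linear maps they induce, conjugated by the actions of the homeomorphism on the boundary state spaces. Recall that $V_h$ is the operator on $V(\Sigma)$ induced by the mapping cylinder of $h$ (equivalently, by the action of $h$ on the glued-in handlebody and spine data), and that $C(\gamma)=Z(\Sigma\times I,(\gamma)_1\times\{1/2\})$ is the endomorphism of $V(\Sigma)$ obtained from the cylinder decorated by the $(1,0)$-colored curve $\gamma$ at the middle level. The strategy is to produce a single homeomorphism of the decorated cylinder that relates the two sides.

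First I would form the product homeomorphism $h\times\mathrm{id}_I\colon \Sigma\times I\to\Sigma\times I$ and observe that it carries the decorated cylinder $(\Sigma\times I,(\gamma)_1\times\{1/2\})$ to $(\Sigma\times I,(h(\gamma))_1\times\{1/2\})$. The color is preserved because $h$ is orientation preserving and the strand label is the single fundamental object $(1,0)$; the level $\{1/2\}$ and the product framing are transported along by the product structure, so no framing correction appears. Next I would invoke functoriality: since $Z$ is a homeomorphism invariant of decorated cobordisms, applying the naturality square to $h\times\mathrm{id}_I$ yields
\[ C(h(\gamma))=Z(\Sigma\times I,(h(\gamma))_1\times\{1/2\})=V_{h}\,C(\gamma)\,V_{h}^{-1}. \]
The incoming boundary $\Sigma\times\{0\}$ and the outgoing boundary $\Sigma\times\{1\}$ are each restricted to by $h$; in the naturality square for $Z$ the outgoing restriction acts on the left by $V_h$, while the incoming restriction acts on the right by its inverse, producing exactly the conjugation.

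The only genuine work is the bookkeeping of conventions rather than any analytic difficulty. I must fix the identification of $V_h$ with the map induced by the mapping cylinder of $h$, decide which end of $\Sigma\times I$ is incoming and which is outgoing, and keep the dual pairing $V(\Sigma)\otimes V(-\Sigma)\cong\mathrm{End}(V(\Sigma))$ consistent, so that the incoming restriction indeed contributes $V_h^{-1}$ rather than $V_h$. This is where care with the orientation of the boundary collars is essential; once these choices are pinned down consistently, the formula is immediate.

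Equivalently, the same identity can be read off skein-theoretically in a way more in keeping with the $C_2$ spider. Writing $V_h$ and $V_h^{-1}$ as the surgery data, namely the $\omega$-colored framed curves implementing the Dehn twists decomposing $h$, placed below and above the level carrying $(\gamma)_1$ inside the cylinder, an ambient isotopy of $\Sigma\times I$ drags $\gamma$ through this surgery presentation and deposits it as $h(\gamma)$ at the middle level. Resolving the resulting web by the spider relations reproduces the stated formula directly. I expect to present the functoriality argument as the main proof and note the skein picture as a concrete alternative.
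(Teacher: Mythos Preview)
Your main argument via naturality of the Reshetikhin--Turaev functor is correct and complete once the identification of $V_h$ with the action of the mapping cylinder is in place (which the paper takes as its definition). This is, however, a genuinely different route from the paper's proof. The paper works entirely at the skein/Kirby level: it reduces to the case where $h$ is a single Dehn twist about a curve $\alpha$, so that $V_h C(\gamma) V_h^{-1}$ is represented by an $\omega$-colored $(-1)$-framed copy of $\alpha$ above $\gamma$ and an oppositely framed $\omega$-colored $\alpha$ below. It then uses the handle-slide invariance of $\omega$ to slide the lower $\alpha$ through $\gamma$, which performs exactly the Dehn twist on $\gamma$, and finally cancels the two now-adjacent oppositely framed $\omega$-curves by balanced stabilization. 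Your functoriality argument is cleaner and manifestly model-independent; the paper's argument has the advantage of being self-contained within the surgery description it has already set up, without appealing to the abstract TQFT axioms.

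One small correction to your ``equivalently'' paragraph: the skein picture is not literally an ambient isotopy of $\Sigma\times I$ dragging $\gamma$ past the surgery curves. Ambient isotopy alone cannot move $\gamma$ across an $\omega$-colored component; what makes this work is precisely the Kirby-move invariance of $\omega$ (handle slide plus stabilization), which is the content of the paper's argument. If you keep that sketch, phrase it in terms of handle slides rather than isotopy.
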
 
\begin{proof}
It suffices to assume that $h$ is a positive Dehn Twist about a curve $\alpha$.  Then our above description of the action tells us we have the web made of a framed $\alpha$ colored with $\omega$ over the curve $\gamma$ over the curve $\alpha$ colored with $\omega$ framed oppositely, as seen on the left of figure $9$.
\begin{figure}
\includegraphics[scale=.35]{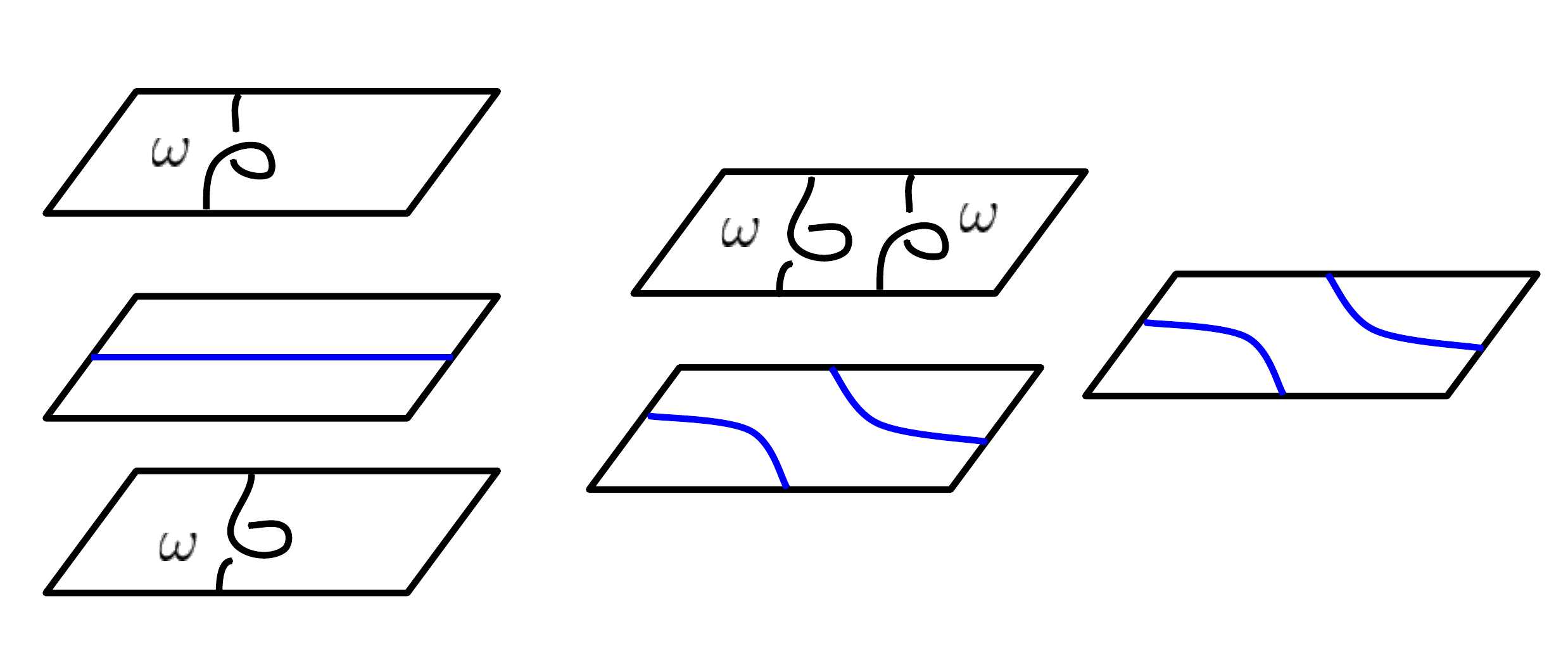}
\caption{Conjugating a Curve Operator}
\end{figure}
Then using the handle slide invariance of $\omega$ we are able to pull the bottom copy of $\alpha$ through $\gamma$ are the cost of a Dehn twist about $\alpha$.  This leaves us with $C(h(a))$ and two copies of $\alpha$ having opposite framings.  Finally we are able to apply the balanced stabilization property of $\omega$ to cancel these opposite copies.  This is visualized in Figure $11$ when $\Sigma$ is the genus $1$ surface with each parallel square having edges identified, and $\alpha$ and $\gamma$ are the meridian and longitude.  
\end{proof}
This result should be thought of as immediately following from the invariance under Kirby moves.
\section{Asymptotic Faithfulness}
\begin{lem}
Let $\alpha$ and $\beta$ be two non-trivial, non-isotopic simple closed curves on a closed orientable surface $\Sigma$.  Then there exists a pants decomposition of $\Sigma$ such that $\alpha$ is one of the decomposing curves and $\beta$ is a non-trivial "graph geodesic" with respect to the decomposition in the sense that $\beta$ does not intersect any curve of the decomposition twice in a row.
\end{lem}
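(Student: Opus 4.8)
The plan is to first place $\beta$ in minimal position and then reformulate the desired conclusion purely combinatorially in terms of the dual trivalent graph. Using the bigon criterion I would isotope $\beta$ so that $|\alpha\cap\beta|$ is minimal, and more generally keep $\beta$ taut with respect to every curve I later adjoin to the decomposition. The key reformulation is that, for a pants decomposition $\mathcal{P}\ni\alpha$ with $\beta$ transverse to $\mathcal{P}$, the spine dual to $\mathcal{P}$ is a trivalent graph in which $\beta$ traces a cyclic edge-path, and the statement \emph{$\beta$ meets no curve of $\mathcal{P}$ twice in a row} is exactly the statement that this path is \emph{non-backtracking}, i.e. that every arc of $\beta$ lying in a single pair of pants joins two \emph{distinct} boundary cuffs. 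Non-triviality of the graph geodesic is just the requirement that $\beta$ cross at least one curve of $\mathcal{P}$; since $\beta$ is non-trivial and not isotopic to $\alpha$, I only need the completion of $\alpha$ to avoid realizing $\beta$ as one of its curves. I also note that on the torus disjoint essential curves are isotopic, so only the intersecting case occurs there and the annulus has no essential return arcs, which is why the statement reduces to ordinary minimal position in that degenerate case.

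Next I would build $\mathcal{P}$ so that it is \emph{adapted} to $\beta$. When $\alpha$ and $\beta$ intersect, let $F$ be the subsurface filled by $\alpha\cup\beta$ (a regular neighborhood of $\alpha\cup\beta$ with disk complementary regions capped off); then $\alpha,\beta\subset\mathrm{int}(F)$ and $\beta$ is disjoint from $\partial F$. I would take $\mathcal{P}$ to consist of $\alpha$, the essential and pairwise non-isotopic components of $\partial F$, an arbitrary pants decomposition of each component of $\Sigma\setminus F$ (these curves miss $\beta$ entirely and are therefore harmless), and a pants decomposition of $F$ containing $\alpha$ chosen to make $\beta$ efficient \emph{inside} $F$. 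When $\alpha$ and $\beta$ are disjoint but non-isotopic (forcing $g\ge 2$), I would instead first select a curve $\delta$ disjoint from $\alpha$ but crossing $\beta$, guaranteeing that $\beta$ crosses the final decomposition, and then complete $\{\alpha,\delta\}$ by applying the same filling-subsurface procedure to $\beta\cup\delta$ while keeping $\alpha$ off to the side.

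The hard part is the elimination of \emph{essential} return arcs: an arc of $\beta$ in a pair of pants $P$ running from a cuff $\gamma$ back to $\gamma$ and separating the other two cuffs is not removable by a bigon, so minimal position alone does not exclude it, yet it produces two consecutive intersections with $\gamma$ and hence an essential backtrack in the dual graph. I would handle this by exploiting the freedom in how $\alpha$ is completed: such a separating arc is dual to a curve that I can instead include in, or flip into, the decomposition, converting the cuff-to-self arc into a pair of seams between distinct cuffs while holding $\alpha$ fixed and keeping $\beta$ taut. Carrying this out greedily---at each stage adjoining a decomposing curve inside a non-pants complementary region, chosen to \emph{cross} rather than be returned to by the arcs of $\beta$---terminates at a genuine pants decomposition since pants decompositions are maximal, and at termination no complementary region admits a further subdivision, which I would argue forces every remaining arc of $\beta$ to be a seam joining distinct cuffs. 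The technical heart of the argument, and the step I expect to require the most care, is showing that a suitable subdividing curve always exists in a non-pants region and that adjoining it strictly decreases the number of essential return arcs of $\beta$ without creating new ones.
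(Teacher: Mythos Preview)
The paper does not prove this lemma at all: it simply cites it as Lemma~4.1 of \cite{FWW} and moves on. So there is no ``paper's own proof'' to compare against here; any comparison must be to the original argument in Freedman--Walker--Wang.

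Your outline is a plausible route to a self-contained proof, and the reformulation (non-backtracking edge-path in the dual trivalent graph, essential return arcs as the obstruction) is exactly the right picture. The reduction via the filling subsurface $F$ and the separate treatment of the disjoint case are sensible. However, the proposal is a sketch rather than a proof, and you yourself flag the gap: the greedy step in which you adjoin or flip a decomposing curve to kill an essential return arc, while keeping $\alpha$ fixed and not creating new return arcs elsewhere, is asserted but not established. In particular, a flip that removes a return arc at one cuff can in principle introduce one at an adjacent cuff, so ``strictly decreases'' needs a well-founded complexity (e.g.\ total weighted intersection with a carefully chosen partial system) rather than just a count of return arcs. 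You also need to check that the curve you propose to adjoin---dual to a separating arc in a pair of pants---is genuinely available, i.e.\ essential, non-peripheral in the ambient complementary region, and not already isotopic to $\alpha$ or another curve of the current system.

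Given that the paper is content to quote \cite{FWW}, your level of detail already exceeds what is required for this application. If you want a complete independent proof, the cleanest fix is to replace the greedy flip argument by an explicit construction inside $F$: cut $F$ along $\alpha$, take the arcs of $\beta$ in $F\setminus\alpha$, and build the remaining pants curves from a maximal collection of pairwise non-isotopic essential arcs of $\beta$ together with boundary components, so that by construction every arc of $\beta$ in each resulting pair of pants is a seam. This is essentially how \cite{FWW} proceeds.
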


This is lemma $4.1$ of \cite{FWW}.

\begin{prop}
Let $\Sigma$ be a closed, oriented surface, $h$ an orientation preserving homeomorphism, and $V_h$ the $\mathrm{Sp}(4)_k$ action.  Suppose there exists a simple closed curve $\alpha\subset \Sigma$ such that $h(\alpha)$ is not isotopic (as a set) to $\alpha$.  Then $V_h$ is a multiple of the identity for at most finitely many $k$.  That is, $h$ is eventually detected as $k$ increases.
\end{prop}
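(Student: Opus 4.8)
The plan is to adapt the conjugation strategy of \cite{FWW}, reducing the statement to the claim that the curve operators $C(\alpha)$ and $C(\beta)$ are distinct whenever $\alpha$ and $\beta$ are non-isotopic simple closed curves. Toward a contradiction, suppose $V_h=\lambda\cdot\mathrm{Id}$ for some scalar $\lambda$. Since conjugation by a scalar is trivial, the curve-operator conjugation relation $V_hC(\gamma)V_h^{-1}=C(h(\gamma))$ gives $C(h(\gamma))=C(\gamma)$ for every simple closed curve $\gamma$; taking $\gamma=\alpha$ and writing $\beta:=h(\alpha)$ yields $C(\beta)=C(\alpha)$. It therefore suffices to exhibit, for all sufficiently large $k$, a single vector in $V(\Sigma)$ on which $C(\alpha)$ and $C(\beta)$ disagree.

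First I would invoke the graph-geodesic lemma to fix a pants decomposition in which $\alpha$ is one of the decomposing curves and $\beta$ is a graph geodesic, and then work in the associated basis of admissible labelings. Relative to this basis $C(\alpha)$ is diagonal: the curve $\alpha$ is isotopic to the boundary of the disk dual to one edge $e$ of the spine, so $C(\alpha)$ simply encircles the clasp coloring $e$ with a $(1,0)$-loop, which by the fusion relations of the $C_2$ spider acts as a scalar depending only on the color $(a,b)$ of $e$. Every basis vector is thus an eigenvector of $C(\alpha)$. By contrast $\beta$ meets several edges of the spine transversally, so $C(\beta)$ threads a $(1,0)$-strand through the corresponding dual disks; resolving this strand through each clasp by Lemma~2.1 writes $C(\beta)$ as a sum of basis vectors whose edge-colors along the path of $\beta$ have been altered. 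Hence $C(\beta)$ is genuinely off-diagonal, and the two operators can coincide only if these off-diagonal contributions cancel.

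The heart of the argument is the construction of a \emph{comparison vector} $v$ witnessing a surviving off-diagonal term. I would choose an admissible labeling $v$ whose colors along $\beta$ are small enough that each can still be raised by a single first-type strand for the level $k$ in question, and then isolate the term of $C(\beta)v$ of maximal complexity, complexity being the two-strand-type analogue of the maximal number of strands crossing the dual disks. Because $\beta$ is a graph geodesic it never meets a decomposing curve twice in a row, so no turnback is created against any clasp and, by the annihilation property of the clasps, the maximal-complexity term is not destroyed; fusing the extra $(1,0)$-strand into a clasp of type $(a,b)$ contributes its top summand $(a+1,b)$ with nonzero coefficient. This produces a basis vector $v'\neq v$ with $\langle v',C(\beta)v\rangle\neq 0$, whereas $\langle v',C(\alpha)v\rangle=0$ because $C(\alpha)v$ is a multiple of $v$. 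Thus $C(\alpha)\neq C(\beta)$, contradicting $C(\alpha)=C(\beta)$. The raised labeling $v'$ is admissible exactly when the order of $q$ is large enough for the relevant triple-clasped spaces to be one-dimensional, which by the admissibility theorem holds for all but finitely many $k$; so $V_h$ is a scalar for at most finitely many $k$.

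The step I expect to be the main obstacle is the complexity bookkeeping forced by the two strand types. I must define complexity so that fusing a single $(1,0)$-strand into a mixed clasp $(a,b)$ has a well-defined maximal outcome whose coefficient is provably nonzero, and confirm that the graph-geodesic hypothesis genuinely prevents the cancellation of this leading term across all crossings of $\beta$ with the decomposition. This is precisely where the single-strand-type $SU(2)_k$ argument does not carry over verbatim, and where the partial order on clasps from Lemma~2.1 and the careful choice of comparison vector do the real work.
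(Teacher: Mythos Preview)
Your overall architecture---reduce via conjugation to $C(\alpha)\neq C(\beta)$, invoke the graph-geodesic lemma, and detect a surviving top-complexity term---is exactly the paper's. The difference lies in the choice of test vector, and this is where your proposal leaves a real gap rather than merely a stylistic variation.

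You propose to apply $C(\beta)$ to some admissible labeling $v$ with ``small enough'' colors, then track the top summand when a $(1,0)$-strand is fused into a clasp of mixed type $(a,b)$. But the tools available in the paper do not control that situation: Theorem~2 (one-dimensionality of the triple clasped space) is stated \emph{only} for triples of the form $((a,0),(b,0),(c,0))$, and Lemma~2.1 (braid absorption) only for $(n,0)$-clasps. With a generic starting vector you would need the analogous statements for mixed $(a,b)$-clasps, which are not established here---indeed, explicit clasp formulas are only known for $(n,0)$ and $(0,n)$. So the very obstacle you flag in your last paragraph is not merely bookkeeping; as written, your argument invokes results that do not exist in the required generality.

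The paper's resolution is simpler than you anticipate: take the input vector to be the \emph{empty labeling} $Z(H)$. Then $C(\alpha)Z(H)=d\cdot Z(H)$ since $\alpha$ bounds a disk, while $C(\beta)Z(H)=Z(H,\beta)$ has, through the disk dual to edge $e$, exactly $p_e$ parallel $(1,0)$-strands and nothing else. Factoring the identity on those strands via Lemma~2.2 gives a top term labeled $(p_e,0)$ on every edge, so at each vertex one lands in a space $I((p_e,0),(p_f,0),(p_g,0))$ where Theorem~2 does apply. The graph-geodesic hypothesis then guarantees no turnbacks, and Lemma~2.1 handles the braiding, yielding a nonzero coefficient on the comparison basis vector $b_w$ with all-$(p_e,0)$ labels. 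In short, the trick is not a more refined complexity function for mixed clasps; it is to choose the input so that mixed clasps never arise in the leading term.
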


\begin{proof}
From Lemma $3.1$ we have
\[V_h C(\alpha)V_h^{-1}=C(h(\alpha)),\]
 and so it suffices to show that for some $k$, \[C(\alpha)\neq C(h(\alpha)).\]  
\par 
By the graph geodesic lemma, Lemma $4.1$ above, there exists a handlebody $H$ bounded by $\Sigma$ such that $\alpha$ bounds an embedded disk in $H$ and $h(\alpha)$ is a non-trivial graph geodesic with respect to a spine $S$ of $H$.  This is illustrated in Figure $12$, where encircled region is exactly what is forbidden by the graph geodesic property.
\begin{figure}[ht]
\centering
\includegraphics[scale=.35]{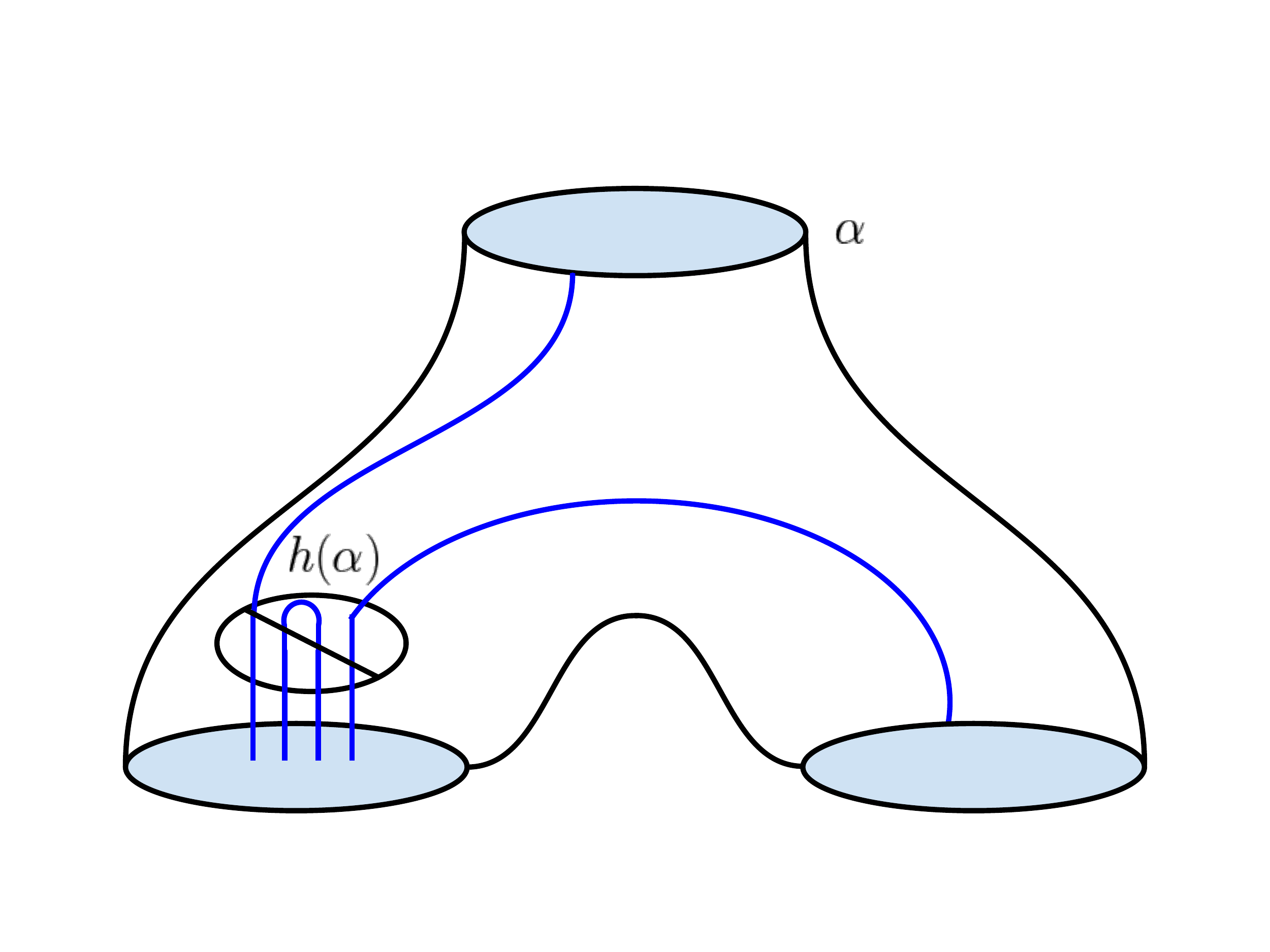}
\caption{The graph geodesic property}\label{turn}
\end{figure}
Now let $Z(H)\in V(\Sigma)$ be the vector determined by $H$ with the empty labelling, and $Z(H,h(\alpha))$ the vector determined by the pair $(H,h(\alpha))$ (where $h(\alpha)$ is pushed into the interior of $H$).  We have 
\[C(\alpha)(Z(H))=Z(H,\alpha)=dZ(H)\]
as $a$ is taken to bound an embedded disk in $H$.  It is also true that 
\[C(h(\alpha))(Z(H))=Z(H,h(\alpha)),\] 
meaning it suffices to show that $Z(H,h(\alpha))$ is not a multiple of $Z(H)$.  
\par 
We look to build a comparison vector. For each edge $e$ of the spine, let $p_e$ be the number of times that $h(\alpha)$ passes through the disk dual to $e$.  Then locally we have that the identity tangle on $p_e$ strands labelled with $(1,0)$.  
\begin{figure}[ht]
\centering
\includegraphics[scale=.4]{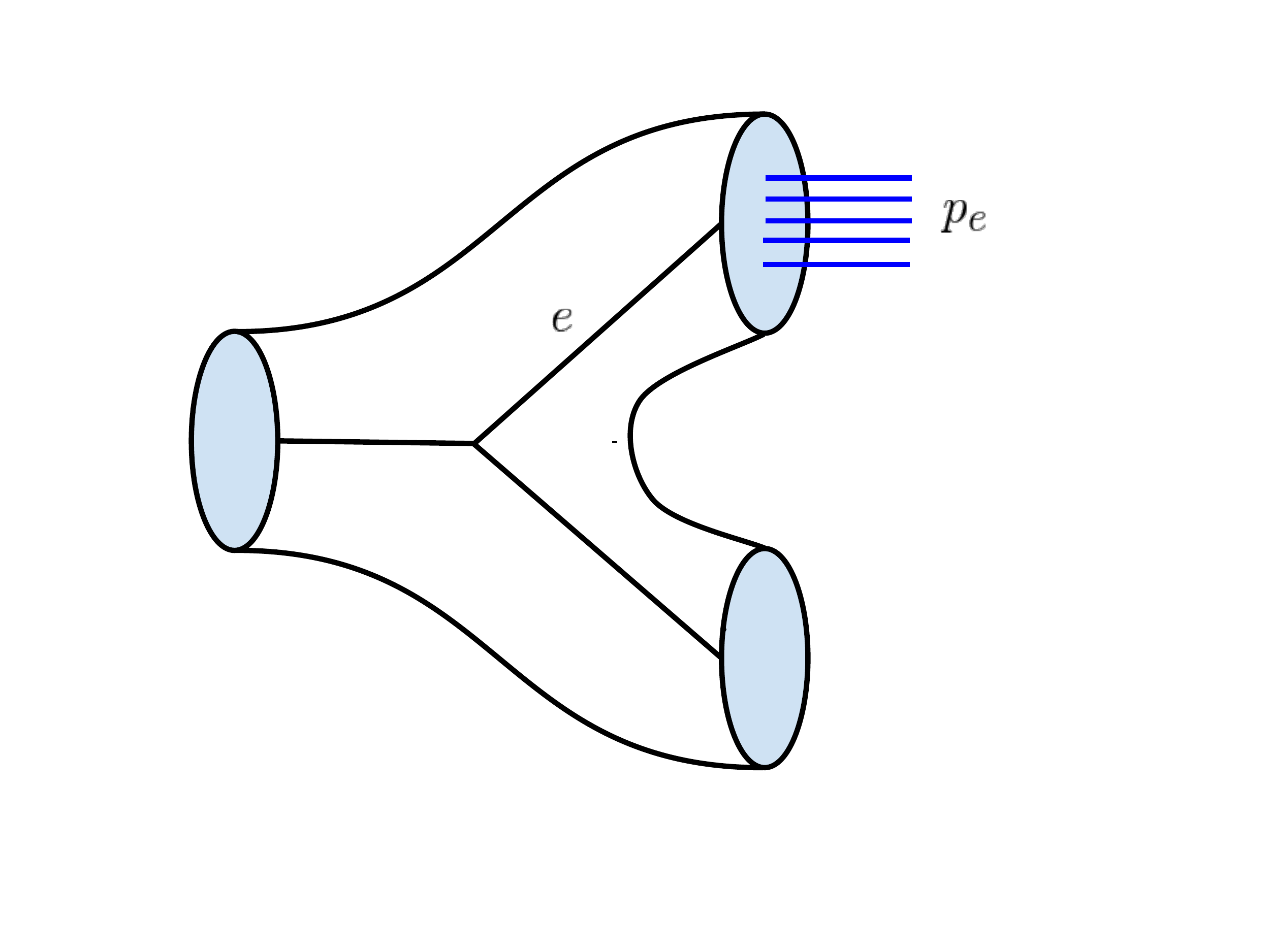}
\caption{•}\label{weight}
\end{figure}
Then our comparison labeling, $w$, of the edge $e$ is labeled by $(p_e,0)$.  Finally define the complexity of a state vector as 
\[m=\max(p_e+p_f+p_g)\}\]
where $e,f,g$ are three edges meeting at a vertex, and the max is over all vertices.  Finally, pick a level $k$ such that the order of $q$ is greater than $2m+4$.  
\par 
Let $\{b_{w}\}$ be the basis vector for $V(\Sigma)$ determined by the labellings $w$ of $S$.  We claim that 
\[Z(H,h(\alpha))= \lambda b_{w}+v,\]
where $\lambda\neq 0$, and $v$ consists of multiples of $b_x$ where $x$ is a label where 
\[(m_e,n_e)\prec (p_e,0)\]
for each each $e$ of $S$ and for $(m_e,n_e)$ the labeling of $e$ corresponding to $x$.  Applying Lemma $2.2$ we see that each edge label for $Z(H,h(\alpha))$ can be factored in the desired manner. We are left to look at the vertex of each trivalent graph.  This vertex is a skein in the triangle space 
\[I((p_e,0),(p_f,0),(p_g,0)).\]
Thus applying Theorem $2$, this skein is a multiple of the basis vector seen in Figure $7$.   We need only see that this multiple is not zero.  This particular diagram is found by pushing $h(\alpha)$ into the pair of pants about the vertex in question.  We can describe each component of $h(\alpha)$ by which boundary components it connects.  The graph geodesic lemma tells us exactly that $h(a)$ never starts and ends at the same boundary component as seen in Figure \ref{turn}. Then applying   Lemma $2.1$ we see that any braiding only contributes a nonzero scalar multiple as well.
We apply this argument to each vertex and we will have a nonzero scalar each time.  Therefore, $Z(H,h(\alpha))$ cannot be a multiple of $Z(H)$ as it decomposes as is a nonzero multiple of a basis vector and other terms and thus is not the empty labeling.  
\end{proof}

\begin{thm}
Let $\Sigma$ be a closed connected oriented surface and $\\MCG(\Sigma)$ its mapping class group. For every non-central $h\in MCG(\Sigma)$ there is an integer $k_0(h)$ such that for any $k\geq k_0(h)$ we have the operator coming from $\mathrm{Sp}(4)_k$,
\[\langle h\rangle:V(\Sigma)\rightarrow V(\Sigma),\]
is not the identity, 
\[\langle h\rangle\neq 1\in \mathcal{P}End(V(\Sigma)),\]
the projective endomorphisms.  In particular, an appropriate infinite direct sum of quantum $Sp(4)$ representations will faithfully represent these mapping class groups modulo center.
\end{thm}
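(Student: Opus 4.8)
The plan is to reduce the theorem to the Proposition by invoking a classical fact about surface mapping class groups: the kernel of the action of $MCG(\Sigma)$ on the set of isotopy classes of unoriented simple closed curves is precisely the center of $MCG(\Sigma)$. First I would take a non-central $h\in MCG(\Sigma)$ and, using this fact, produce a simple closed curve $\alpha\subset\Sigma$ whose image $h(\alpha)$ is not isotopic (as a set) to $\alpha$. The point is the contrapositive: if no such curve existed, so that $h$ fixed the isotopy class of every simple closed curve, then $h$ would lie in the center, contradicting our hypothesis.

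With such an $\alpha$ in hand, the Proposition applies directly. It supplies an integer $k_0(h)$ so that for every $k\geq k_0(h)$ the operator $V_h$ coming from $\mathrm{Sp}(4)_k$ fails to be a scalar multiple of the identity. Translating into projective endomorphisms, this is exactly the statement that $\langle h\rangle\neq 1\in\mathcal{P}End(V(\Sigma))$ for all $k\geq k_0(h)$, which is the first assertion of the theorem.

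For the closing claim about faithfulness modulo center, I would assemble the level-$k$ projective representations into the infinite direct sum over all admissible $k$. An element lies in the kernel of this direct sum precisely when it is projectively trivial at every level. By what we have just shown, a non-central $h$ escapes the kernel at every sufficiently large $k$, so no non-central element survives in the kernel of the direct sum; conversely every central element acts projectively trivially at each level. Hence the direct sum descends to a faithful representation of $MCG(\Sigma)$ modulo its center.

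The main obstacle, and the only genuinely new input beyond the already-established Proposition, is pinning down the identification of the kernel of the curve action with the center. Here I would have to treat the low-genus exceptional cases with care rather than merely cite the equivalence: for genus $0$ the mapping class group is trivial; for genus $1$ and $2$ the center is the order-two subgroup generated by the hyperelliptic involution, which indeed preserves every isotopy class of simple closed curve; and for genus at least $3$ the center is trivial. Checking that \emph{fixes every simple closed curve up to isotopy} is equivalent to \emph{central} uniformly across these cases is the delicate point, though it is well documented in the mapping class group literature.
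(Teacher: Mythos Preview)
Your proposal is correct and follows the same route as the paper: reduce to the Proposition by producing, for any non-central $h$, a curve $\alpha$ with $h(\alpha)$ not isotopic to $\alpha$. The paper's justification of this step is a one-liner that renders your low-genus case analysis unnecessary: if $h$ fixed every isotopy class of curve it would commute with every Dehn twist (since $hT_\gamma h^{-1}=T_{h(\gamma)}$), and Dehn twists generate $MCG(\Sigma)$, so $h$ would be central---an argument that is uniform in the genus.
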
  
\begin{proof}
If $h$ fixes all simple closed curves then $h$ must commute with all possible Dehn twists.  As Dehn twists generate the mapping class group of any surface we have that $h$ must be in the center.  Thus for any non-central $h\in MCG(\Sigma)$ that $h(\alpha)$ is not isotopic to $\alpha$, the main theorem follows from the above proposition.  
\end{proof}

\end{document}